\newcommand{\CM}{Cohen-Macaulay}
\newcommand{\wrt}{with respect to}
\newcommand{\m}{\mathfrak{m} }
\newcommand{\q}{\mathfrak{q} }
\newcommand{\FF}{\mathcal{T}}
\newcommand{\GG}{\mathcal{G}}
\newcommand{\C}{\mathbf{C} }
\newcommand{\rt}{\rightarrow}
\newcommand{\ov}{\overline}
\newcommand{\Ass}{\operatorname{Ass}}
\newcommand{\height}{\operatorname{height}}
\newcommand{\Supp}{\operatorname{Supp}}
\newcommand{\injdim}{\operatorname{injdim}}
\newcommand{\Hom}{\operatorname{Hom}}
\newcommand{\Ext}{\operatorname{Ext}}
\newcommand{\Spec}{\operatorname{Spec}}
\theoremstyle{plain}
\newtheorem{theorem}{Theorem}[section]
\newtheorem{lemma}[theorem]{Lemma}
\newtheorem{proposition}[theorem]{Proposition}
\theoremstyle{definition}
\newtheorem{remark}[theorem]{Remark}
\theoremstyle{remark}
\begin{document}

\title{On injective resolutions of local cohomology modules}
 \author{Tony J. Puthenpurakal}
\date{\today}
\address{Department of Mathematics, Indian Institute of Technology Bombay, Powai, Mumbai 400 076, India}
\email{tputhen@math.iitb.ac.in}

\subjclass{Primary 13D45; Secondary 13D02, 13H10 }
\keywords{local cohomology, injective resolutions}

\begin{abstract}
Let $K$ be a field of characteristic zero   and let $R = K[X_1,\ldots,X_n]$. Let $I$ be an ideal in $R$ and let   $M = H^i_I(R)$ be the $i^{th}$-local
cohomology module of $R$ with respect to $I$.
 Let $c = \injdim M$. We prove that if $P$
is a prime ideal in $R$ with Bass number $\mu_c(P,M) > 0$ then $P$ is a maximal ideal in $R$.
\end{abstract}

\maketitle
\section{Introduction}
Throughout this paper $R$ is a commutative Noetherian ring. If $M$ is an $R$-module and  $Y$ be a locally closed subscheme of $\Spec(R)$, we denote by $H^i_Y(M)$ the $i^{th}$-local cohomology module of $M$ with support in $Y$. If $Y$ is closed in $\Spec(R)$ with defining ideal $I$ then $H^i_Y(M)$ is denoted by  $H^i_I(M)$. 
 
In a remarkable paper, \cite{HuSh}, Huneke and Sharp proved that if $R$ is a regular ring containing a field of characteristic $p > 0$, and $I$ is an ideal in $R$ then the local cohomology modules of $R$ \wrt \ $I$ have the following properties:
\begin{enumerate}[\rm(i)]
\item
$H^j_{\m}(H^i_I(R))$ is injective, where $\m$ is any maximal ideal of $R$.
\item
$\injdim_R H^i_I(R) \leq \dim \Supp H^i_I(R)$.
\item
The set of associated primes of $H^i_I(R)$ is finite.
\item
All the Bass numbers of $H^i_I(R)$ are finite.
\end{enumerate}
Here $\injdim_R H^i_I(R)$ denotes the injective dimension of $H^i_I(R)$. Also $\Supp M = \{ P \mid  M_P \neq 0 \ \text{and $P$ is a prime in $R$}\}$ is the support of an $R$-module $M$. The $j^{th}$ Bass number of an $R$-module $M$ with respect to a prime ideal $P$ is defined as $\mu_j(P,M) = \dim_{k(P)} \Ext^j_{R_P}(k(P), M_P)$ where $k(P)$ is the residue field of $R_P$.

 In another remarkable paper, for regular rings in characteristic zero, Lyubeznik was able to establish the above properties  
for a considerably larger class of functors   than just the local cohomology modules, see  \cite{Lyu-1}. We call such functors as \textit{Lyubeznik functors}, see section two  for details. 
If $\FF$ is a Lyubeznik functor on $Mod(R)$ then $\FF(R)$ satisfies the following properties:
  \begin{enumerate}[\rm(i)]
\item
$H^j_{\m}(\FF(R))$ is injective, where $\m$ is any maximal ideal of $R$.
\item
$\injdim_R \FF(R) \leq \dim \Supp \FF(R)$.
\item
For every maximal ideal $\m$, the number of  associated primes of $\FF(R)$ contained in $\m$
 is finite.
\item
All the Bass numbers of $\FF(R)$ are finite.
\end{enumerate}
We should note that if $R = K[X_1,\ldots,X_n]$ then the number of associate primes of $\FF(R)$ is finite.

 The results of Lyubeznik for characteristic zero raised the question of whether the results (i)–-(iv) of Huneke and Sharp (in characteristic $p>0$) could be extended to this larger class of functors. In \cite{Lyu-2}, Lyubeznik proves it.
 
 If $M$ is a finitely generated module over a \CM \ ring $R$ and say $M$ has finite injective dimension $d =  \dim R$, then it is elementary to prove that if $\mu_d(P,M) > 0$ then $P$ is a maximal ideal in $R$, use \cite[3.1.13]{BH}. This fails for modules which are not finitely generated, for instance consider the injective hull $E(R/P)$ of  $R/P$  where $P$ is a  prime ideal which is not maximal. 
 
 Our main result is 
\begin{theorem}\label{main}
Let $K$ be a field of characteristic zero and let $R = K[X_1,\ldots,X_n]$. Let $\FF$ be a Lyubeznik functor on $Mod(R)$. Suppose $ \injdim \FF(R) = c$. If $P$ is a prime ideal in $R$ with Bass number $\mu_c(P, \FF(R)) > 0 $ then $P$ is a maximal ideal of $R$.
\end{theorem}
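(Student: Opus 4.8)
The plan is to deduce the theorem from a ``going up'' principle for the top Bass number, with property (i) serving to translate Bass numbers at maximal ideals into nonvanishing of local cohomology. Write $M = \FF(R)$. The first step is, for each maximal ideal $\m$, to exploit the Grothendieck spectral sequence $\Ext^p_R(k(\m), H^q_\m(M)) \Rightarrow \Ext^{p+q}_R(k(\m), M)$, which is available because $k(\m)$ is $\m$-power torsion and so $R\Hom_R(k(\m),-)$ only sees $R\Gamma_\m(M)$. By property (i) every $H^q_\m(M)$ is injective, so the terms with $p>0$ vanish, the sequence degenerates, and I obtain $\mu_j(\m,M) = \dim_{k(\m)}\Hom_R(k(\m), H^j_\m(M))$, hence $H^j_\m(M) \cong E(R/\m)^{\mu_j(\m,M)}$. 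In particular $\mu_j(\m,M)\neq 0$ if and only if $H^j_\m(M)\neq 0$. Since Lyubeznik functors commute with localization and $R_P$ is again regular containing $\QQ$, the identical computation over $R_P$ gives $H^j_{PR_P}(M_P) \cong E_{R_P}(k(P))^{\mu_j(P,M)}$.

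Because $\injdim M = c$ means that no Bass number is nonzero in degree $>c$, it suffices to prove the following: if $P$ is non-maximal and $\mu_c(P,M)\neq 0$, then $\mu_{c+1}(\m,M)\neq 0$ for some maximal ideal $\m\supset P$; this contradicts $\injdim M = c$ and forces $P$ to be maximal. To produce such an $\m$, I would fix a maximal ideal $\m\supset P$ and use the long exact sequence of local cohomology attached to the closed point $\{\m\}\subset V(P)$ with open complement $U = V(P)\setminus\{\m\}$, namely $\cdots \to H^j_\m(M) \to H^j_P(M) \to H^j_U(M) \to H^{j+1}_\m(M)\to\cdots$. Localizing at the generic point $P$ of $V(P)$ identifies $(H^c_U(M))_P = H^c_{PR_P}(M_P)\cong E_{R_P}(k(P))^{\mu_c(P,M)}\neq 0$ by the first step, so $H^c_U(M)\neq 0$. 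Thus $\mu_{c+1}(\m,M)\neq 0$ will follow as soon as the connecting map $H^c_U(M)\to H^{c+1}_\m(M)$ is nonzero, equivalently once the restriction $\alpha\colon H^c_P(M)\to H^c_U(M)$ fails to be surjective for at least one such $\m$.

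The hard part, and the genuine obstacle, is precisely this non-surjectivity, because it cannot follow from properties (i)--(iv) alone. Over a regular \emph{local} ring the analogous assertion is simply false: the standard witness is $\operatorname{Frac}(R)=E_R(R/(0))$ over a discrete valuation ring, which is holonomic and has its only nonzero Bass number at the non-maximal prime $(0)$. What rescues the polynomial-ring case is global holonomicity over the Weyl algebra $A_n(K)$: the module $M$, and hence each $H^c_P(M)$ (local cohomology of a holonomic module is holonomic, by Lyubeznik), is finitely generated over $A_n(K)$. The plan is to show that if $\alpha$ were surjective for \emph{every} maximal $\m\supset P$, then modding out by $\Gamma_\m$-torsion over the closed points of $V(P)$ would exhibit $E_R(R/P)^{\mu_c}$ as a holonomic quotient of the holonomic modules $H^c_P(M)$, forcing $E_R(R/P)$ itself to be a holonomic $A_n(K)$-module. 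For a non-maximal prime $P$ of $K[X_1,\dots,X_n]$ this is impossible: $E_R(R/P)$ is not even finitely generated over $A_n(K)$ (for $P=(0)$ it is $K(X_1,\dots,X_n)$, visibly not $A_n(K)$-finite, and the general non-maximal case reduces to this). This contradiction is exactly where the hypotheses ``$R=K[X_1,\dots,X_n]$'' and ``characteristic zero'' are used in an essential way, beyond (i)--(iv), and I expect it to be the main technical point of the argument.

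I anticipate two places requiring care. First, making the ``holonomic quotient'' extraction precise—passing from the local, support-theoretic statement to a global assertion over $A_n(K)$—is delicate, since the naive reduction to the case $\dim R/P = 1$ by localizing at $\m$ destroys exactly the global finiteness that is needed; the argument must therefore be organized so that the Weyl-algebra finiteness is invoked \emph{before} localizing the base. A clean way to enforce this may be an induction on $n$, using that the relevant cohomology modules remain holonomic and finitely generated over $A_n(K)$ at every stage. Second, one must verify the compatibility of the two spectral/exact sequences (the $\Ext$ spectral sequence at $\m$ and the change-of-support sequence for $\{\m\}\subset V(P)$) with localization at $P$, so that the computed ranks $\mu_c(P,M)$ and $\mu_{c+1}(\m,M)$ are genuinely the ones appearing in the minimal injective resolution; this is bookkeeping once the holonomicity obstruction in the previous paragraph is established.
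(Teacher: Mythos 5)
Your skeleton is, in essence, the one the paper uses (its Case 1): from $\injdim M = c$ one gets that the restriction maps out of $H^c_P(M)$ in degree $c$ are surjective (your maps $\alpha$, the paper's maps $H^c_P(M) \rt H^c_P(M)_f$ for $f \notin P$); injectivity is controlled by $\Ass$; and after stripping off embedded torsion one exhibits $E(R/P)^{\mu_c}$ as a quotient of the holonomic module $H^c_P(M)$, so the whole theorem rests on the impossibility of $E(R/P)$ being holonomic for a non-maximal prime $P$. The decisive gap is that you assert this impossibility instead of proving it: you verify it only for $P = (0)$ (where the denominator argument for $K(X_1,\ldots,X_n)$ is indeed immediate) and claim that ``the general non-maximal case reduces to this.'' No such reduction exists, and none is apparent: you cannot pass to differential operators on $R/P$ (singular in general, and the $A_n(K)$-structure does not descend), and you cannot localize at $P$ or at a maximal ideal $\m$ without destroying exactly the $A_n(K)$-finiteness you want to contradict --- an obstruction you yourself point out and then leave unresolved (``I expect it to be the main technical point''). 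That missing statement is precisely the content of the paper's two lemmas, i.e.\ all of its real work. Lemma \ref{lemma1} proves non-holonomicity of $E(R/P)$ only for $\height P = n-1$, and not by reduction to $(0)$: there one uses that $C = E(R/P)/H^{n-1}_P(R)$ would be holonomic and supported at finitely many closed points, picks via Theorem \ref{locus} a closed point of $V(P)$ outside that set at which $R/P$ is regular, and contradicts Grothendieck non-vanishing. For $\height P \le n-2$ the paper never proves the global statement at all; it proves a local surrogate (Lemma \ref{lemma2}: no Lyubeznik functor on $Mod(R_\m)$ can have value $E(R/P)_\m^c$), whose proof is the technical heart of the paper: complete at $\m$, invoke Bj\"ork's structure theory of holonomic modules over $K^\prime[[Z_1,\ldots,Z_n]]$ through Proposition \ref{fin-gen} (generic finite generation of $\Hom(\widehat{R_\m}/P\widehat{R_\m}, V)$), choose a prime $\q_\lambda = (Z_1,\ldots,Z_g, Z_{g+1}+\lambda Z_{g+2})$ avoiding the bad locus, and descend finite generation along the faithfully flat map $(R_\m)_{\q_\lambda} \rt (\widehat{R_\m})_{\q_\lambda \widehat{R_\m}}$ (Proposition \ref{flat-inf-gen}) to reach the contradiction that $k(P)$ would be finitely generated over a local ring in which $P$ survives as a non-maximal prime. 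Without an argument here, your proof is incomplete exactly at its crux.

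Two secondary points. First, when $\height P \le n-2$ the embedded associated primes of $H^c_P(M)$ need not be maximal, so quotienting only by torsion ``over the closed points of $V(P)$'' does not yield a module with $\Ass = \{P\}$, and its map to $H^c_P(M)_P = E(R/P)^{\mu_c}$ need not be injective; one must remove torsion along all the finitely many embedded primes $Q \supsetneq P$ (this is where Lyubeznik's finiteness of $\Ass \FF(R)$ genuinely enters --- the paper instead sidesteps this by localizing at a maximal ideal chosen by Theorem \ref{locus}(4) to avoid every $Q_i$, at the price of then needing the local Lemma \ref{lemma2}). Second, your reformulation ``it suffices to produce $\m$ with $\mu_{c+1}(\m,M) \neq 0$'' is vacuous rather than wrong: $\injdim M = c$ forces $H^{c+1}_\m(M) = 0$ for every $\m$, so your maps $\alpha$ are automatically surjective, and the entire burden of proof falls on the holonomicity obstruction --- the part left unproved.
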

As an aside we note that to best of our knowledge this is the first result whose proof  uses the fact that $\Ass \FF(R)$ is finite for any Lyubeznik functor $\FF$.

A natural question is what can we say about $\mu_c(\m,\FF(R))$ as $\m$ varies over maximal ideals in $R$.
Our next result is essentially only an observation.
\begin{proposition}\label{main-2}
Let $K$ be an algebraically closed field of characteristic zero and let $R = K[X_1,\ldots,X_n]$. Let $\FF$ be a Lyubeznik functor on $Mod(R)$. Suppose $ \injdim \FF(R) = c$. Then for all $i = 0,\ldots, c$; the set 
$$\{ \mu_i(\m, \FF(R)) \mid \m  \ \text{a maximal ideal of }\ R \}$$
 is bounded.
\end{proposition}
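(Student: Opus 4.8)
Write $M=\FF(R)$, and recall from Lyubeznik's theory that $M$ is a holonomic module over the Weyl algebra $A_n(K)=K\langle X_1,\dots,X_n,\partial_1,\dots,\partial_n\rangle$. The plan is to first reduce the Bass numbers to Koszul cohomology and then transport the computation, via the Fourier transform, to de Rham cohomology in a family, where holonomicity forces uniform finiteness. Since $K$ is algebraically closed, every maximal ideal has the form $\m_a=(X_1-a_1,\dots,X_n-a_n)$ for a unique $a=(a_1,\dots,a_n)\in K^n$, and $R/\m_a=K$. As $\Ext^i_R(R/\m_a,M)$ is annihilated by $\m_a$, it is a $K$-vector space and is unchanged by localization, so $\mu_i(\m_a,M)=\dim_K\Ext^i_R(R/\m_a,M)$. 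The sequence $X_1-a_1,\dots,X_n-a_n$ is a regular sequence generating $\m_a$, hence $\Ext^i_R(R/\m_a,M)=H^i(X_1-a_1,\dots,X_n-a_n;M)$ is Koszul cohomology. Thus the proposition is equivalent to the statement that $\dim_K H^i(\bx-a;M)$ is bounded as $a$ ranges over $K^n$.

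Next I would apply the Fourier automorphism $\mathcal F$ of $A_n(K)$ given by $X_i\mapsto\partial_i$ and $\partial_i\mapsto -X_i$. It sends $M$ to a holonomic module ${}^{\mathcal F}M$ and sends the multiplication operator $X_i-a_i$ to $\partial_i-a_i$, so that $\dim_K H^i(\bx-a;M)=\dim_K H^i(\partial_1-a_1,\dots,\partial_n-a_n;{}^{\mathcal F}M)$. The right-hand side is exactly the algebraic de Rham cohomology $H^i_{dR}$ of the exponentially twisted module ${}^{\mathcal F}M\otimes E^{-a\cdot\bx}$, where $E^{-a\cdot\bx}$ is the rank one integrable connection on which $\partial_i$ acts as $\partial_i-a_i$. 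I would then globalize over the parameter: on $\mathbb A^n_{\bx}\times\mathbb A^n_{a}$ form the universal module $\mathcal M=({}^{\mathcal F}M\boxtimes\mathcal O_{\mathbb A^n_a})\otimes E^{-a\cdot\bx}$ and its $D$-module pushforward $\int_\pi\mathcal M$ along the projection $\pi\colon\mathbb A^n_{\bx}\times\mathbb A^n_a\to\mathbb A^n_a$. The point is that external products, exponential twists, and pushforwards along projections all preserve holonomicity (Bernstein), so $\int_\pi\mathcal M$ is a bounded complex of holonomic $D_{\mathbb A^n_a}$-modules, and base change for the de Rham pushforward identifies its fibre at $a$ with $H^\bullet_{dR}({}^{\mathcal F}M\otimes E^{-a\cdot\bx})$.

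To conclude, I would use that the fibrewise dimension of the cohomology of a holonomic $D_{\mathbb A^n_a}$-module is a constructible function of $a$: it is constant along the strata of a finite algebraic stratification of $\mathbb A^n_a$, being generically the rank of a connection and controlled on the lower-dimensional singular locus by Noetherian induction. A constructible function on $\mathbb A^n_a$ takes only finitely many values, so $a\mapsto\mu_i(\m_a,M)=\dim_K H^i_{dR}({}^{\mathcal F}M\otimes E^{-a\cdot\bx})$ is bounded for each $i$, as required. The main obstacle --- and the step that genuinely uses holonomicity rather than only the finiteness of individual Bass numbers from property~(iv) --- is the transition in the previous paragraph: the naive Koszul fibre $H^\bullet(\bx-a;M)$ is \emph{not} the $D$-module restriction of $M$, and it is precisely the Fourier transform that rewrites it as a de Rham cohomology, i.e. as a $D$-module pushforward, to which the preservation of holonomicity and the constructibility of fibre dimensions apply. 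The remaining care is in checking that $\mathcal M$ is holonomic and that base change holds for $\int_\pi$; granting these, boundedness is automatic.
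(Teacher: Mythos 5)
Your proposal is correct in outline, but it takes a genuinely different and much heavier route than the paper. The paper's proof is a short multiplicity computation: since $K$ is algebraically closed, $E(R/\m)$ is a simple $A_n(K)$-module of Bernstein multiplicity $1$, so $\mu_i(\m,M)=\ell(H^i_\m(M))$ as an $A_n(K)$-module; computing $H^i_\m(M)$ from the \v{C}ech complex on $X_1-a_1,\ldots,X_n-a_n$ and using $\ell\leq e$ together with Bernstein's bound $e(M_f)\leq e(M)(1+\deg f)^n$ (where here $\deg f=i$) gives the explicit uniform bound $\mu_i(\m,M)\leq\binom{n}{i}e(M)(1+i)^n$. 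You instead pass through the Koszul presentation of $\Ext^i_R(R/\m_a,M)$, the Fourier transform, the exponentially twisted de Rham cohomology in the family over $\mathbb{A}^n_a$, and then invoke preservation of holonomicity under pushforward, base change, and constructibility of fibre dimensions of holonomic modules. All of these are standard theorems of the characteristic-zero theory of algebraic $D$-modules, and modulo the two points you yourself flag (holonomicity of $\mathcal{M}$ and base change, plus the spectral-sequence bookkeeping relating the derived fibre of the pushforward complex to the fibres of its cohomology modules) your argument does prove boundedness --- indeed it proves the stronger statement that $a\mapsto\mu_i(\m_a,M)$ is a constructible function. What you lose relative to the paper is twofold: there is no explicit bound, and the argument is locked to characteristic zero, whereas the paper's multiplicity argument transfers to characteristic $p>0$ (Remark 5.4) because the inequality $\ell(M_f)\leq n!\,\ell(M)(1+\deg f)^n$ has a characteristic-free analogue. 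One small caution: constructibility of fibre dimensions is usually proved over $\mathbb{C}$ by analytic (Kashiwara) methods, so over an arbitrary algebraically closed field of characteristic zero you must either run the purely algebraic proof by Noetherian induction, as you sketch, or justify a Lefschetz-principle transfer; this is doable but is real work compared with the paper's two-line estimate.
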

The surprising thing about Proposition \ref{main-2} is that I do not know whether such a result holds for finitely generated modules over $R$.
 
A natural question is whether the results \ref{main} and \ref{main-2} hold in characteristic $p > 0$.  Although we expect this to be true; our techniques do not work in positive characteristic.  We are only able to extend Propostion \ref{main-2} to a subclass of Lyubeznik functors, see \ref{char-p}.

We now describe in brief the contents of this paper. In section two we define Lyubeznik functors and also a few preliminary results on holonomic modules which we need. In section three we discuss two Lemmas which will help in proving Theorem \ref{main}. In section four we prove Theorem \ref{main}. Finally in section five we prove Proposition \ref{main-2}.
\section{Preliminaries}
In this section we define Lyubeznik functors. We also prove a result on holonomic modules which we need. 

\s \textit{Lyubeznik functors:}\\
Let $R$ be a commutative Noetherian ring and let $X = \Spec(R)$. Let $Y$ be a locally closed subset of $X$. If $M$ is an $R$-module and  $Y$ be a locally closed subscheme of $\Spec(R)$, we denote by $H^i_Y(M)$ the $i^{th}$-local cohomology module of $M$ with support in $Y$.  Suppose 
 $Y = Y_1 \setminus Y_2$ where $Y_2 \subseteq Y_1$ are two closed subsets of $X$ then we have an exact sequence of functors
 \[
 \cdots \rt H^i_{Y_1}(-) \rt H^i_{Y_2}(-) \rt H^i_Y(-) \rt H^{i+1}_{Y_1}(-) \rt .
 \]
 A Lyubeznik functor $\FF$ is any functor of the form $\FF = \FF_1\circ \FF_2 \circ \cdots \circ \FF_m$ where every functor $\FF_j$  is either $H^i_Y(-)$ for some locally closed subset of $X$ or the kernel, image or
cokernel of some arrow in the previous long exact sequence for closed
subsets $Y_1,Y_2$ of $X$  such that $Y_2 \subseteq Y_1$.

We need the following result from \cite[3.1]{Lyu-1}.
\begin{proposition}\label{flat-L}
 Let $\phi \colon R \rt S$ be a flat homomorphism of Noetherian rings. Let $\FF$ be a 
 Lyubeznik functor on $Mod(R)$. Then there exists a Lyubeznik functor $\widehat{\FF}$ on $Mod(S)$ and isomorphisms $\widehat{\FF}(M\otimes_R S) \cong \FF(M)\otimes_R S$ which is functorial in $M$.
\end{proposition}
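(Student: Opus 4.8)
The plan is to induct on the number $m$ of factors in $\FF = \FF_1 \circ \FF_2 \circ \cdots \circ \FF_m$, reducing the statement to each of the three admissible building blocks: local cohomology $H^i_Y(-)$ with $Y$ locally closed, and kernels, images or cokernels of the connecting arrows in the long exact sequence attached to a pair $Y_2 \sub Y_1$ of closed sets. Write $f = \Spec(\phi) \colon \Spec(S) \rt \Spec(R)$ for the induced map on spectra. For each (locally) closed subset $Y$ occurring in the recipe for $\FF$ I would take its preimage $\wh{Y} = f^{-1}(Y)$; since $f$ is continuous, $\wh{Y}$ is again (locally) closed, and for a pair $Y_2 \sub Y_1$ of closed sets one has $\wh{Y_2} \sub \wh{Y_1}$ with $\wh{Y_1 \setminus Y_2} = \wh{Y_1} \setminus \wh{Y_2}$. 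Replacing every subset $Y$ by $\wh{Y}$ in the defining recipe produces the candidate Lyubeznik functor $\wh{\FF}$ on $Mod(S)$; the work is to produce the natural isomorphism and to check that it survives composition.

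The base case is flat base change for local cohomology with closed support. If $Y_1 = V(I)$, then computing either from the \v{C}ech complex on a generating set of $I$ or from $H^i_I(M) = \varinjlim_t \Ext^i_R(R/I^t, M)$, I would use that $- \otimes_R S$ is exact (by flatness), commutes with direct limits, and satisfies the Ext base-change isomorphism $\Ext^i_R(R/I^t, M) \otimes_R S \cong \Ext^i_S(S/I^t S, M \otimes_R S)$, valid because $R/I^t$ is finitely presented over the Noetherian ring $R$. Since $V(IS) = \wh{Y_1}$, this yields a natural isomorphism
\[
\theta_M \colon H^i_{Y_1}(M) \otimes_R S \xrightarrow{\ \cong\ } H^i_{\wh{Y_1}}(M \otimes_R S),
\]
functorial in $M$, with no finiteness hypothesis on $M$.

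For the remaining blocks the key fact is that flatness makes $- \otimes_R S$ an exact functor that commutes with kernels, images and cokernels. Applying $- \otimes_R S$ to the long exact sequence of functors for $Y_2 \sub Y_1$ thus gives an exact sequence, and the naturality of $\theta$ together with its compatibility with the connecting homomorphisms (checked at the level of \v{C}ech complexes) identifies this exact sequence, term by term, with the corresponding long exact sequence over $S$ for $\wh{Y_2} \sub \wh{Y_1}$. Consequently, if $\FF_j$ is the kernel, image or cokernel of an arrow $\alpha$ in the $R$-sequence, then $\wh{\FF_j}$ is the kernel, image or cokernel of the base-changed arrow $\wh{\alpha}$ over $S$, and exactness of $- \otimes_R S$ gives $\FF_j(M) \otimes_R S \cong \wh{\FF_j}(M \otimes_R S)$ naturally. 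Finally I would assemble the composite: given the isomorphism for $\FF_2 \circ \cdots \circ \FF_m$ by induction, I apply the single-step isomorphism for $\FF_1$ to the $R$-module $(\FF_2 \circ \cdots \circ \FF_m)(M)$ and splice, obtaining $\FF(M) \otimes_R S \cong \wh{\FF}(M \otimes_R S)$.

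The main obstacle is not any single isomorphism but the bookkeeping of naturality: one must guarantee that $\theta$ commutes with every map appearing in the defining long exact sequences, since only then do kernels, images and cokernels base-change canonically and the induction closes up. I expect this to be handled cleanly by realizing all the functors through a fixed complex (a bounded \v{C}ech-type or direct-limit resolution), so that $- \otimes_R S$ is applied once at the level of complexes and every comparison map becomes an instance of the same natural transformation.
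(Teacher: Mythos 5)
Your argument is correct and is essentially the standard one: the paper itself gives no proof of this proposition, quoting it directly from Lyubeznik \cite[3.1]{Lyu-1}, and your induction on the factors of $\FF$, with flat base change for $H^i_{V(I)}(-)$ via the \v{C}ech complex (or $\varinjlim \Ext$) as the base case and exactness of $-\otimes_R S$ to handle kernels, images, cokernels and the long exact sequence, is exactly how that reference proceeds. The only step worth making explicit is that the identification of the locally closed term $H^i_Y(-)\otimes_R S\cong H^i_{\wh{Y}}(-\otimes_R S)$ in the long exact sequence follows from the two closed-support identifications by the five lemma.
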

 
\s  \textit{Lyubeznik functors and holonomicity:}\\
 Let $K$ be a field of characteristic zero. Let $S = K[[X_1,\ldots,X_n]]$. Let $D$ be the ring of $K$-linear differential operators on $S$. Let $\FF $ be a Lyubeznik functor on $Mod(S)$. If $M$ is any holonomic $D$-module then $\FF(M)$ is a holonomic $D$-module; see \cite[2.2d]{Lyu-1}. In particular $\FF(S)$ is a holonomic $D$-module. 
 
 Let $R = K[X_1,\ldots,X_n]$ and let $A_n(K)$ be the $n^{th}$-Weyl algebra over $K$. 
 Let $\FF$ be a Lyubeznik functor on $Mod(R)$. If $M$ is any holonomic $A_n(K)$-module then $\FF(M)$ is a holonomic $A_n(K)$-module; (the proof in \cite[2.2d]{Lyu-1} can be modified to prove this result). In particular $\FF(R)$ is a holonomic $A_n(K)$-module.
 
 \begin{remark}
 In \cite{B} holonomic $A_n(K)$-modules are called modules belonging to the Bernstein class.
 \end{remark}
\s\label{d-mod} Let $k$ be a field of characteristic zero and let $S = k[[Y_1,\ldots,Y_n]]$. Let $D$ be the ring of $k$-linear differential operators on $S$. Let $C$ be a simple holonomic $D$-module. Notice $\Ass_S C = \{ P \}$ for some prime $P$ in $S$. Also $C$ is $P$-torsion; see \cite[3.3.16-17]{B}. It follows from \cite[p.\ 109, lines 3-6]{B} that  there exists $h \in (S/P)$ non-zero such that $\Hom_S(S/P, C)_h$ is a finitely generated $(S/P)_h$ module. Let $g$ be a pre-image of $h$ in $S$. Then clearly 
$\Hom_S(S/P,C)_g$ is a finitely generated $S_g$-module. We  now generalize this result.
\begin{proposition}\label{fin-gen}
(with hypotheses as in \ref{d-mod}) Let $M$ be a holonomic $D$-module. Assume $\Ass_S M = \{ P \}$ and $M$ is $P$-torsion. Then there exists $h \in S\setminus P$ such that $\Hom_S(S/P, M)_h$ is finitely generated as a $S_h$-module.
\end{proposition}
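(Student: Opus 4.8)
The plan is to induct on the length of $M$ as a $D$-module. Recall that every holonomic $D$-module has finite length in the category of $D$-modules (see \cite{B}), and that holonomicity passes to $D$-submodules and $D$-quotients. Before starting the induction I would weaken the hypothesis: I would instead prove the statement for \emph{every} holonomic $D$-module $M$ that is $P$-torsion, dropping the requirement $\Ass_S M = \{P\}$. The reason is that the short exact sequences produced by the induction will have quotients that are still $P$-torsion but whose associated primes may be strictly larger than $P$; the $P$-torsion hypothesis is exactly what is stable under passing to these quotients, while the condition $\Ass_S M = \{P\}$ is not. Recognizing that one must generalize in this way is, to my mind, the main subtlety of the argument.

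For the induction, if $M = 0$ the claim is trivial, so I would assume $M \neq 0$. Since $M$ has finite length I can choose a nonzero $D$-submodule $M'$ of minimal length; it is simple, and being a submodule of a holonomic module it is holonomic, so the discussion in \ref{d-mod} applies to it. This produces a short exact sequence $0 \rt M' \rt M \rt M'' \rt 0$ of $D$-modules in which $M''$ is holonomic, $P$-torsion, and of strictly smaller length. Applying the left exact functor $\Hom_S(S/P, -)$ yields an exact sequence $0 \rt \Hom_S(S/P, M') \rt \Hom_S(S/P, M) \rt L \rt 0$ with $L \sub \Hom_S(S/P, M'')$. By the induction hypothesis there is $h'' \in S \setminus P$ with $\Hom_S(S/P, M'')_{h''}$ finitely generated over $S_{h''}$.

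The remaining point is to treat the simple module $M'$. By \ref{d-mod} we have $\Ass_S M' = \{Q\}$ and $M'$ is $Q$-torsion; since $M' \sub M$ is $P$-torsion, $Q \supseteq P$. If $Q = P$, then \ref{d-mod} directly supplies $h' \in S \setminus P$ with $\Hom_S(S/P, M')_{h'}$ finitely generated over $S_{h'}$. If $Q \supsetneq P$, I would instead argue by support: $\Hom_S(S/P, M') = (0 :_{M'} P)$ is a submodule of the $Q$-torsion module $M'$, so $\Supp_S \Hom_S(S/P, M') \sub V(Q)$; choosing any $h' \in Q \setminus P$ then forces $\Hom_S(S/P, M')_{h'} = 0$, since every prime of $V(Q)$ contains $h'$. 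In either case $\Hom_S(S/P, M')_{h'}$ is finitely generated over $S_{h'}$, and $h' \notin P$.

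Finally I would set $h = h' h''$, which lies in $S \setminus P$ because $P$ is prime. Localizing the exact sequence at $h$ and using that localization is exact and that finite generation is preserved under further localization, $L_h$ embeds into the finitely generated $S_h$-module $\Hom_S(S/P, M'')_h$; since $S_h$ is Noetherian, $L_h$ is finitely generated, and hence $\Hom_S(S/P, M)_h$ is finitely generated, being an extension of $L_h$ by the finitely generated module $\Hom_S(S/P, M')_h$. This closes the induction and gives the proposition (the original statement being the special case $\Ass_S M = \{P\}$). Apart from the decision to generalize the hypothesis, every step is routine: the delicate case $Q \supsetneq P$ is handled purely by the support computation, and the rest is bookkeeping with localization and Noetherianity.
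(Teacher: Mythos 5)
Your proof is correct and is essentially the paper's argument: both induct along a composition series of the holonomic module and split into the two cases $\Ass = \{P\}$ versus $\Ass = \{Q\}$ with $Q \supsetneq P$ for the simple piece, invoking \ref{d-mod} in the first case and killing the second by localizing at an element of $Q \setminus P$. The only difference is organizational: you peel a simple submodule off the bottom and induct on the quotient, which is why you must generalize the hypothesis to arbitrary $P$-torsion holonomic modules, whereas the paper inducts upward through the terms $M_i$ of a fixed filtration, which are submodules of $M$ and hence automatically satisfy $\Ass_S M_i = \{P\}$, so the generalization you describe as the main subtlety is not actually needed in that formulation.
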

\begin{proof}
Let $ 0 = M_0 \varsubsetneq M_1\varsubsetneq M_2\varsubsetneq \cdots \varsubsetneq M_{n-1} \varsubsetneq M_n = M$ be a filtration of $M$ with $M_i/M_{i-1}$ simple $D$-module for $i = 1,\ldots,n$. By induction on $i$ we prove that there exists $h_i \in S\setminus P$ such that $\Hom_S(S/P, M_i)_{h_i}$ is finitely generated as a $S_{h_i}$-module.

For $i = 1$ note that $M_1$ is a simple holonomic $D$-module. Also $\Ass_S M_1 \subseteq \Ass_S M = \{ P \}$. Then by \ref{d-mod} we get the required assertion.
We assume the result for $i = r$ and prove it for $i = r + 1$.
Say $\Hom_S(S/P, M_r)_{h_r}$ is a finitely generated $S_{h_r}$-module. We consider the following two cases.

Case 1 : $\Ass_S M_{r+1}/M_r = \{ P \}$. \\  By \ref{d-mod} there exists $g_r \in S \setminus P$ such that $\Hom_S(S/P, M_{r+1}/M_r)_{g_r}$ is finitely generated $S_{g_r}$-module. 
Consider the exact sequence
\[
0 \rt \Hom_S(S/P, M_r) \rt \Hom_S(S/P, M_{r+1}) \rt \Hom_S(S/P, M_{r+1}/M_r).
\]
Localize at $h_{r+1} = h_rg_r \in S \setminus P$. Notice
\begin{enumerate}
\item
$\Hom_S(S/P, M_r)_{h_{r+1}} = \left(\Hom_S(S/P,M_r)_{h_r}\right)_{g_r}$ is finitely generated as a $S_{h_{r+1}}$-module.
\item
$\Hom_S(S/P, M_{r+1}/M_r)_{h_{r+1}} = \left( \Hom_S(S/P, M_{r+1}/M_r)_{g_r} \right)_{h_r}$ is  finitely generated as a $S_{h_{r+1}}$-module.
\end{enumerate}
It follows that $\Hom_S(S/P, M_{r+1})_{h_{r+1}}$ is finitely generated as a $S_{h_{r+1}}$-module.

Case 2: $\Ass_S M_{r+1}/M_r = \{ Q \}$ with $Q \neq P$. \\ As $M$ is $P$-torsion we have that $Q\supsetneq P$. Take $g \in Q \setminus P$. Then $(M_{r+1}/M_r)_g = 0$. So $\Hom_S(S/P, M_{r+1}/M_r)_g = 0$. Put $h_{r+1} = h_rg \in S \setminus P$. Then note that 
\[
\Hom_S(S/P, M_{r+1})_{h_{r+1}} \cong \Hom_S(S/P, M_r)_{h_{r+1}} = \left(\Hom_S(S/P,M_r)_{h_r}\right)_{g},
\]
 is finitely generated as a $S_{h_{r+1}}$-module.
Thus by induction we get that there exists $h \in S \setminus P$ such that $\Hom_S(S/P, M)_h$ is finitely generated as a $S_h$-module.
\end{proof}

\s Finally we need the following well-known result regarding non-singular locus of affine domains. 
\begin{theorem}\label{locus}
Let $A$ be an affine domain, finitely generated over a  perfect field $k$. Then
\begin{enumerate}[\rm (1)]
\item
The non-singular locus  of $A$ is non-empty and an open subset of $\Spec(A)$.
\item
There exists a maximal ideal $\m$ of $A$ with $A_\m$ regular local.
\item
If $\dim A \geq 1$ then there exists infinitely many maximal ideals of $A$ with $A_\m$ regular local.
\item
Suppose  $\dim A \geq 2$ and let $f \in A$. Then there exists a maximal ideal $\m$ of $A$ with $f \notin \m $ and $A_\m$-regular local.
\end{enumerate}
\end{theorem}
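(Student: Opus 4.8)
The plan is to reduce all four statements to two standard facts about affine algebras over a field: that the non-singular locus of $A$ is a non-empty open subset of $\Spec(A)$, and that $A$ is a Jacobson ring, so that the closed points are dense.

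For (1), non-emptiness is immediate, since the generic point $(0)$ has local ring $A_{(0)} = \operatorname{Frac}(A)$, which is a field and hence regular; thus $(0)$ lies in the non-singular locus $U$. For openness I would use that $k$ is perfect. I would write $A = k[X_1,\ldots,X_m]/\mathfrak{p}$ with $\mathfrak{p} = (f_1,\ldots,f_s)$ prime and $d = \dim A$ (so $\height \mathfrak{p} = m-d$), and let $J \subseteq A$ be the ideal generated by the images of the $(m-d)\times(m-d)$ minors of the Jacobian matrix $(\partial f_i/\partial X_j)$. Since $k$ is perfect the Jacobian criterion correctly detects the singular locus, which is therefore $V(J)$; hence $U = \Spec(A)\setminus V(J)$ is open. (Alternatively one may simply invoke that a finitely generated algebra over a field is excellent, so its regular locus is open for any base field.)

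Statements (2)--(4) are then formal. For (2), $A$ is an affine algebra over a field, hence Jacobson, so every non-empty open subset of $\Spec(A)$ meets the maximal spectrum; applying this to $U$ gives a maximal ideal $\m$ with $A_\m$ regular. For (3), assuming $d \geq 1$, choose $g \neq 0$ with the basic open $D(g)$ contained in $U$; then $A_g$ is again an affine domain with $\operatorname{Frac}(A_g) = \operatorname{Frac}(A)$, so $\dim A_g = d \geq 1$, and by Noether normalization $A_g$ is finite over some $k[T_1,\ldots,T_d]$. The induced surjection on maximal spectra shows $A_g$ has infinitely many maximal ideals (as $\mathbb{A}^d_k$ does for $d \geq 1$), and by the Jacobson property these correspond to infinitely many maximal ideals of $A$ lying in $D(g) \subseteq U$. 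For (4) we may assume $f \neq 0$ (otherwise no maximal ideal avoids $f$); then $V(J)\cup V(f)$ is a proper closed subset of the irreducible space $\Spec(A)$, since it omits the generic point ($J \neq 0$ because $U \neq \emptyset$, and $f \neq 0$). Hence $U \cap D(f)$ is a non-empty open set and, by the Jacobson property, contains a maximal ideal $\m$ with $f \notin \m$ and $A_\m$ regular.

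The only step needing real care is the openness in (1), where the perfectness of $k$ is used to guarantee that the Jacobian criterion computes the singular locus (the same conclusion is available from excellence for an arbitrary base field). Everything else is formal: non-emptiness of $U$ comes for free from the generic point being a field, and (2)--(4) follow from the Jacobson property and the elementary dimension theory of affine domains. I would also note that the hypothesis $\dim A \geq 2$ in (4) is stronger than needed---the argument yields the conclusion for every non-zero $f$ whenever $A$ has positive dimension.
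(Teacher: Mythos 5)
Your proof is correct. Note that the paper itself offers no proof of Theorem \ref{locus}; it is stated as a well-known fact, so there is no argument of the author's to compare against. Your route is the standard one: perfectness of $k$ gives openness of the regular locus via the Jacobian criterion (or excellence), non-emptiness is free from the generic point, and (2)--(4) then follow from the Jacobson property of affine algebras together with Noether normalization for the ``infinitely many closed points'' count in (3). All of these steps are sound. Your remark that (4) fails verbatim for $f=0$ (and that the hypothesis $\dim A\geq 2$ is superfluous once $f\neq 0$) is a genuine, if minor, observation about the statement as printed; in the one place the paper invokes part (4) --- Case 2 of the proof of Theorem \ref{main}, applied to $A=R/P$ with $f$ the image of $f_1\cdots f_c$ where each $f_i\notin P$ --- the element is indeed non-zero, so nothing in the paper is affected.
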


\section{Two Lemma's}
In this section we establish two lemma's which will enable us to prove our main result. Let $K$ be a field of characteristic zero and let $P$ be a prime ideal of height in $R = K[X_1,\ldots,X_n]$. Let $E(R/P)$ denote the injective hull of $R/P$.
Recall that $E(R/P) = H^g_P(R)_P$. It follows that $E(R/P)$ is a $A_n(K)$-module and the natural inclusion $H^g_P(R) \rt E(R/P)$ is $A_n(K)$-linear.

\begin{lemma}\label{lemma1} Let $K$ be a field of characteristic zero and let $P$ be a prime ideal of height $n-1$ in $R = K[X_1,\ldots,X_n]$. Then $E(R/P)$ is not a holonomic $A_n(K)$-module. 
\end{lemma}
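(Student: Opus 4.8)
The plan is to argue by contradiction: assume $E(R/P)$ is a holonomic $A_n(K)$-module and derive a contradiction from the single numerical datum $\dim R/P = 1$. The argument rests on two ingredients: (a) an exact identification of the socle $\Hom_R(R/P, E(R/P))$, and (b) a finiteness principle for holonomic modules in the spirit of Proposition \ref{fin-gen}. First I would record the $R$-module structure of $E(R/P) = E_{R_P}(k(P))$: it is an $R_P$-module, it is $P$-power torsion, $\Supp_R E(R/P) = \Ass_R E(R/P) = \{P\}$, and since every $s \in R \setminus P$ acts invertibly, $E(R/P)_h = E(R/P)$ for all $h \in R \setminus P$. In particular $E(R/P)$ satisfies exactly the hypotheses ``$\Ass = \{P\}$ and $P$-torsion'' appearing in Proposition \ref{fin-gen}.

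For the finiteness input I would transport Proposition \ref{fin-gen} from the power-series/$D$-module setting to the Weyl-algebra setting. The direct route is to re-run its proof over $A_n(K)$: a holonomic $A_n(K)$-module has finite length and hence a composition series by simple holonomic modules, and the structural statement \ref{d-mod} (that a simple holonomic module $C$ with $\Ass C = \{P\}$ becomes finitely generated over $R_h$ after applying $\Hom_R(R/P,-)$ and inverting a suitable $h \notin P$) holds equally for the Bernstein class by \cite{B}; the same two-case induction along the composition series then yields an $h \in R\setminus P$ with $\Hom_R(R/P, E(R/P))_h$ finitely generated over $R_h$. An alternative route is to pass along the flat map $R \rt S = K[[X_1,\ldots,X_n]]$ (after translating a closed point of $V(P)$ to the origin), use compatibility of holonomicity with this base change, and invoke \ref{fin-gen} directly. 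I expect this transfer to be the main obstacle, since it requires checking that finite length and the simple-module structure of \ref{d-mod} survive in the algebraic (non-complete) category — precisely the point the paper flags when it remarks that ``the proof in \cite[2.2d]{Lyu-1} can be modified'' for $A_n(K)$.

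Granting the finiteness principle, the contradiction is immediate. Since $E(R/P)$ is the injective hull of $R/P$, evaluation at $1$ identifies $\Hom_R(R/P, E(R/P))$ with the socle $(0 :_{E(R/P)} P)$, which is the socle of $E_{R_P}(k(P))$ over the local ring $R_P$ and is therefore one-dimensional over $k(P)$; thus $\Hom_R(R/P, E(R/P)) \cong k(P) = \operatorname{Frac}(R/P)$ as $R$-modules. For any $h \in R \setminus P$ we have $(\operatorname{Frac}(R/P))_h = \operatorname{Frac}(R/P)$, which is a module over the domain $(R/P)_h = R_h/PR_h$ on which it is the field of fractions. Because $\height P = n-1$ we have $\dim R/P = 1$, so by Theorem \ref{locus} the affine domain $R/P$ has infinitely many maximal ideals, only finitely many of which contain $h$; hence $(R/P)_h$ is again a one-dimensional domain and its fraction field is not a finitely generated $(R/P)_h$-module, a fortiori not finitely generated over $R_h$. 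This contradicts the finiteness principle, so $E(R/P)$ is not holonomic.
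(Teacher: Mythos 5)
Your endgame is correct and attractive: $\Hom_R(R/P,E(R/P))$ is indeed the socle $k(P)=\operatorname{Frac}(R/P)$, it is unchanged by inverting any $h\notin P$, and since $\dim R/P=1$ the ring $(R/P)_h$ is a one-dimensional affine domain (never a field, since only finitely many of its infinitely many maximal ideals contain $h$), so its fraction field is not a finitely generated module over it. The gap is the finiteness principle you feed into this. The paper proves \ref{d-mod} and \ref{fin-gen} only for $S=k[[Y_1,\ldots,Y_n]]$, quoting Bj\"ork's results for the formal power series ring; it nowhere establishes, and never uses, a Weyl-algebra analogue of \ref{d-mod}. Your ``direct route'' therefore rests on an unproved input: that a simple holonomic $A_n(K)$-module $C$ with $\Ass_R C=\{P\}$ has $\Hom_R(R/P,C)_h$ finitely generated over $R_h$ for some $h\notin P$. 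This is true (it is the generic structure of holonomic modules on their support, via Kashiwara equivalence over the smooth locus), but it is a substantive theorem that you assert with a citation of \cite{B} that in fact covers only the complete local case; as you yourself flag, this transfer is the whole content of your argument, and it is not carried out.

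Moreover, your ``alternative route'' (complete at a closed point and invoke \ref{fin-gen} there) does not merely face an obstacle --- it fails outright exactly when $\height P=n-1$. If $(R/P)_\m$ is regular, then $\widehat{R_\m}/P\widehat{R_\m}\cong K^\prime[[Z_n]]$ is a one-dimensional complete \emph{local} domain, so for any $h\in\widehat{R_\m}\setminus P\widehat{R_\m}$ whose image lies in $(Z_n)$ the ring $(\widehat{R_\m}/P\widehat{R_\m})_h$ is already the field $K^\prime((Z_n))$, and the conclusion of \ref{fin-gen} holds vacuously: the socle localized at $h$ is free of rank one over it. No contradiction can then be pulled back along \ref{flat-inf-gen}. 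This is precisely why the paper restricts Lemma \ref{lemma2} (the completion argument with the $\q_\lambda$ trick) to height $\le n-2$ and proves the present lemma by an entirely different, more elementary device: from $0\to H^{n-1}_P(R)\to E(R/P)\to C\to 0$ it deduces that $C$ is holonomic and supported at finitely many maximal ideals, chooses $\m$ by Theorem \ref{locus} so that $E(R/P)_\m=H^{n-1}_{PR_\m}(R_\m)$, and then the localization sequence forces $H^n_{(PR_\m,f)}(R_\m)=0$, contradicting Grothendieck's non-vanishing theorem. To salvage your approach you would need to prove the Weyl-algebra version of \ref{d-mod} in full; otherwise the paper's route is the one to take.
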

\begin{proof}
Suppose if possible $E(R/P)$ is a holonomic $A_n(K)$-module.  We have an exact sequence  of $A_n(K)$-modules
\[
0 \rt H^{n-1}_P(R) \rt E(R/P) \rt C \rt 0.
\]
As $E(R/P)$ is holonomic we have that $C$ is also a holonomic $A_n(K)$-module. Notice $C_P = 0$. It follows that $C$ is supported at only finitely many maximal ideals of $R$, say $\m_1,\ldots,\m_r$. By Theorem \ref{locus}(3) there exists a maximal ideal $\m$ of $R$ such that $\m \neq \m_i$ for all $i$ and $(R/P)_\m$ is regular local.
Note $H^{n-1}_P(R)_\m = E(R/P)_\m$ as $C_\m = 0$. If $\m R_\m = (z_1,\ldots,z_n)$ then as $R_\m/P R_\m$ is regular we may assume that $P R_\m = (z_1,\ldots,z_{n-1})$. In particular $H^n_{PR_\m}(R_\m) = 0$.

Let $f \in \m R_\m \setminus P R_\m$. Note that we have an exact sequence
\[
0 \rt H^{n-1}_{PR_\m}(R_\m) \rt H^{n-1}_{PR_\m}(R_\m)_f \rt H^n_{(PR_\m ,f)}(R_\m) \rt H^n_{PR_\m}(R_\m) = 0
\] 
As $H^{n-1}_{PR_\m}(R_\m) = H^{n-1}_P(R)_\m = E(R/P)_\m$ it follows that the first map in the above exact sequence 
is an isomorphism. It follows that $H^n_{(PR_\m ,f)}(R_\m) = 0$. This contradicts Grothendieck's non-vanishing theorem as $\sqrt{(PR_\m, f)} = \m R_\m$.
\end{proof}

Our next  result is

\begin{lemma}\label{lemma2}
Let $K$ be a field of characteristic zero and let $P$ be a height $g$ prime in $R = K[X_1,\ldots,X_n]$ with $g \leq n-2$. Suppose $\m$ is a maximal ideal in $R$ with $(R/P)_\m$ a regular local ring. Let $\FF$ be a Lyubeznik functor on $Mod(R_\m)$. Then $\FF(R_\m) \neq E(R/P)_\m^c$ for any $c > 0$.
\end{lemma}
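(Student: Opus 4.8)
The plan is to argue by contradiction: assume $\FF(R_\m) \cong E(R/P)_\m^c$ for some $c > 0$ and manufacture a holonomic module that violates Proposition \ref{fin-gen}. Write $A = (R/P)_\m$, a regular local domain of dimension $d = n-g \geq 2$, set $\mathfrak{q} = PR_\m$, and let $S = \wh{R_\m}$ be the $\m$-adic completion. By the Cohen structure theorem $S \cong k[[Y_1,\ldots,Y_n]]$, where $k$ is the residue field of $R_\m$ (a field of characteristic zero), so the $D$-module formalism of \ref{d-mod} applies to $S$. Since $R_\m \rt S$ is flat, Proposition \ref{flat-L} furnishes a Lyubeznik functor $\wh{\FF}$ on $Mod(S)$ with $\wh{\FF}(S) \cong \FF(R_\m)\otimes_{R_\m} S \cong (E(R/P)_\m \otimes_{R_\m} S)^c =: N$. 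In particular $N$ is a holonomic $D$-module. Note that $PS$ is prime and $B := S/PS = \wh{A}$ is a complete regular local (hence UFD) domain of dimension $d$.

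First I would verify the hypotheses of Proposition \ref{fin-gen} for $N$. Flat base change for associated primes, together with the fact that $S/\mathfrak{q} S = B$ is a domain, gives $\Ass_S N = \{PS\}$; and since $E(R/P)_\m$ is $\mathfrak{q}$-torsion and completion is flat, $N$ is $PS$-torsion. Next I would compute $\Hom_S(S/PS, N)$. Using that $R_\m/\mathfrak{q}$ is finitely presented and $S$ is flat,
\[
\Hom_S(S/PS,\, E(R/P)_\m \otimes_{R_\m} S) \cong \Hom_{R_\m}(R_\m/\mathfrak{q},\, E(R/P)_\m)\otimes_{R_\m} S.
\]
The socle computation over the regular local ring $R_\m$ gives $\Hom_{R_\m}(R_\m/\mathfrak{q}, E(R/P)_\m) = k(\mathfrak{q}) = \operatorname{Frac}(A)$, so $\Hom_S(S/PS, N) \cong F^c$, where $F = \operatorname{Frac}(A)\otimes_A B = (A\setminus 0)^{-1}B$ is the generic formal fibre of $A$. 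It is worth flagging that $F$ is genuinely smaller than $E_S(S/PS)$: the naive guess $E(R/P)_\m\otimes_{R_\m} S \cong E_S(S/PS)$ is \emph{false}, because completion inverts far fewer elements, and this is precisely why the correct object to analyse is the formal fibre $F$.

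The heart of the argument, and the step I expect to be the main obstacle, is to show that for \emph{every} $\bar h \in B\setminus 0$ the localization $F_{\bar h} = (A\setminus 0)^{-1}B_{\bar h}$ fails to be a finitely generated $B_{\bar h}$-module; since $\Hom_S(S/PS,N)_h \cong F_{\bar h}^c$ is killed by $PS$, this contradicts the conclusion of Proposition \ref{fin-gen} and finishes the proof. The plan is: if $F_{\bar h}$ were module-finite over $B_{\bar h}$, then each $a^{-1}$ with $a \in A\setminus 0$ would be integral over $B_{\bar h}$, which clearing denominators forces every such $a$ to be a unit in $B_{\bar h}$. I would rule this out by unique factorisation in $B$: writing the finitely many prime factors of $\bar h$ as $q_1,\ldots,q_s$, the unit condition forces every prime factor in $B$ of every $a \in \m_A\setminus 0$ to lie among the $q_j$, whence $\m_A \subseteq \bigcup_{j=1}^s (q_j B \cap A)$. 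Prime avoidance then yields $\m_A = q_{j_0}B \cap A$ for some $j_0$, so $\m_A B \subseteq q_{j_0}B$; but $\height \m_A B = \dim A = d \geq 2$ for the flat local completion map, while $\height q_{j_0}B = 1$, a contradiction. Hence some $a \in \m_A\setminus 0$ has a prime factor $q \notin \{q_1,\ldots,q_s\}$, and the height-one prime $qB$ contains $a$ but not $\bar h$, so $a$ is a non-unit in $B_{\bar h}$ and $F_{\bar h}$ is not finitely generated. The only ancillary facts to pin down are standard: that $A$ is excellent with regular completion $B$, that $\height \m_A B = d$ for the completion map, and the socle computation above; everything else reduces to prime avoidance and unique factorisation in the UFD $B$.
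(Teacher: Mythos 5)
Your proof is correct, and it follows the paper's own strategy up to the decisive step: the passage to the completion $S=\wh{R_\m}$, the observation via Proposition \ref{flat-L} that $V=E(R/P)_\m\otimes_{R_\m}S$ is holonomic, the verification that $V$ is $PS$-torsion with $\Ass_S V=\{PS\}$, the appeal to Proposition \ref{fin-gen}, and the socle computation $\Hom_S(S/PS,V)\cong k(P)\otimes_{R_\m}S=(A\setminus 0)^{-1}B$ (with $A=(R/P)_\m$ and $B=\wh{A}=S/PS$) are exactly the paper's steps. Where you genuinely diverge is in extracting the contradiction. The paper takes the $h$ supplied by Proposition \ref{fin-gen}, constructs the family of height-$(g+1)$ primes $\q_\lambda=(Z_1,\ldots,Z_g,Z_{g+1}+\lambda Z_{g+2})$, uses a primary-decomposition argument to choose $\lambda$ with $h\notin\q_\lambda\wh{R_\m}$, and then descends along the flat local map $(R_\m)_{\q_\lambda}\rt(\wh{R_\m})_{\q_\lambda\wh{R_\m}}$ via Proposition \ref{flat-inf-gen} to conclude that $k(P)$ is module-finite over $(R_\m)_{\q_\lambda}$, which is absurd. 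You instead stay inside $B$ and prove the stronger, cleaner statement that $(A\setminus 0)^{-1}B_{\bar h}$ is module-finite over $B_{\bar h}$ for no nonzero $\bar h$: module-finiteness would make each $a\in\m_A\setminus 0$ integral, hence invertible, in $B_{\bar h}$, and unique factorization in the regular local ring $B$ together with prime avoidance then forces $\m_B=\m_A B$ into a principal height-one prime, contradicting $\dim B=n-g\geq 2$. Both arguments invoke $g\leq n-2$ at precisely this point. Your route dispenses with the $\q_\lambda$ family, the primary-decomposition step, and Proposition \ref{flat-inf-gen}, trading them for the determinant trick and the Auslander--Buchsbaum UFD property of $B$; it also isolates what is arguably the conceptual content of the lemma, namely that the generic formal fibre of $A$ is nowhere module-finite. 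The paper's route uses a softer toolkit (no UFD input) at the cost of the auxiliary prime construction. The supporting facts you flag (injectivity of $A\rt\wh{A}$, $\m_A B=\m_B$ of height $\dim A$, $\Ass_S V=\{PS\}$ by flat base change, and the socle identification with $k(P)$) are all standard and correctly deployed.
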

\begin{proof}
Suppose if possible $\FF(R_\m) = E(R/P)_\m^c$ for some $c > 0$. Let $\widehat{R_\m}$ be the completion of $R_\m$ at $\m R_\m$. Note $\widehat{R_\m} = K^\prime[[Z_1,\ldots,Z_n]]$ where $K^\prime \cong R/\m$. Let $D$ be the ring of $K^\prime$-linear differential operators on $\widehat{R_\m}$. Note by \ref{flat-L} there exists a Lyubeznik functor $\widehat{\FF}$ on $Mod(\widehat{R_\m})$  such that $ \widehat{\FF}( \widehat{R_\m}) = \FF(R_\m)\otimes \widehat{R_\m}$.  In particular $E(R/P)_\m^c \otimes \widehat{R_\m}$ is a holonomic $D$-module. So $V = E(R/P)_\m\otimes \widehat{R_\m}$ is a holonomic $D$-module. 

As $(R/P)_\m$ is regular local we may assume that $PR_\m = (Z_1,\ldots,Z_g)$. Note $n \geq g + 2$. In particular we have that $P\widehat{R_\m}$ is a prime ideal in $\widehat{R_\m}$. Notice $V$ is $P\widehat{R_\m}$-torsion. Furthermore $\Ass V =  \{ P\widehat{R_\m} \}$.  Using Proposition \ref{fin-gen} we get that  there exists $h \in \widehat{R_\m} \setminus P\widehat{R_\m}$
such that $\Hom(\widehat{R_\m}/P\widehat{R_\m}, V)_h$ is a finitely generated $(\widehat{R_\m})_h$-module.
Notice $\Hom_{R_\m}(R_\m/PR_\m, E(R/P)_\m) = k(P)$ where $k(P)$ is the quotient field of $R_\m/P R_\m$. It follows that  
\[
\Hom(\widehat{R_\m}/P\widehat{R_\m}, V) = \Hom_{R_\m}(R_\m/PR_\m, E(R/P)_\m) \otimes \widehat{R_\m}
= k(P)\otimes \widehat{R_\m}.
\]

For $\lambda \in K$ let $\q_\lambda = (Z_1,\ldots,Z_g, Z_{g+1} + \lambda Z_{g+2})$. Clearly $\q_\lambda $ is a prime ideal of height $g+1$ in $R_\m$ containing $PR_\m$. Furthermore  we have that $\q_\lambda \widehat{R_\m}$ is a prime ideal in $\widehat{R_\m}$. If $\lambda_1 \neq \lambda_2$ then it is easy to show that $\q_{\lambda_1} \neq \q_{\lambda_2}$.
Now consider $\ov{h}$, the image of $h$ in $\widehat{R_\m}/P\widehat{R_\m}$. By considering a primary decomposition of $(\overline{h})$ it follows that infinitely many $\q_\lambda \widehat{R_\m}$ do not contain $h$. Choose one such $\lambda$. Thus we have that 
$\Hom(\widehat{R_\m}/P\widehat{R_\m}, V)_{\q_\lambda \widehat{R_\m}} $ is a finitely generated 
$(\widehat{R_\m})_{\q_\lambda \widehat{R_\m}}$-module. 
Notice we have a flat local map
$(R_\m)_{\q_\lambda} \rt (\widehat{R_\m})_{\q_\lambda \widehat{R_\m}}$.
Furthermore note that
\begin{align*}
\Hom(\widehat{R_\m}/P\widehat{R_\m}, V)_{\q_\lambda \widehat{R_\m}} &= k(P)\otimes_{R_\m}\widehat{R_\m} \otimes_{\widehat{R_\m}} (\widehat{R_\m})_{\q_\lambda \widehat{R_\m}}, \\
&= k(P)\otimes_{R_\m} (\widehat{R_\m})_{\q_\lambda \widehat{R_\m}}, \\
&= k(P)\otimes_{R_\m} (R_\m)_{\q_\lambda} \otimes_{(R_\m)_{\q_\lambda}}(\widehat{R_\m})_{\q_\lambda \widehat{R_\m}}, \\
&=  k(P) \otimes_{(R_\m)_{\q_\lambda}}(\widehat{R_\m})_{\q_\lambda \widehat{R_\m}}.
\end{align*}
In the last equation we have used that $k(P)_{\q_\lambda} = k(P)$. By Proposition \ref{flat-inf-gen} we get that $k(P)$ is a finitely generated $(R_\m)_{\q_\lambda}$-module. This is a contradiction as $P(R_\m)_{\q_\lambda}$ is a non-maximal prime ideal in $(R_\m)_{\q_\lambda}$.
\end{proof}

We need the following result in the proof of Lemma \ref{lemma2}.
\begin{proposition}\label{flat-inf-gen}
Let $\phi \colon A \rt B$ be a flat local map of Noetherian local rings. Let $L$ be an $A$-module.
Then $L$ is finitely generated as a $A$-module if and only if $L \otimes_A B$ is finitely generated as a $B$-module.
\end{proposition}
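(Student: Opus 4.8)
The plan is to deduce the statement from faithfully flat descent of finite generation, so the first thing I would establish is that a flat local homomorphism $\phi \colon A \rt B$ of Noetherian local rings is automatically \emph{faithfully} flat. Indeed, since $\phi$ is local we have $\m_A B \subseteq \m_B \subsetneq B$, so the extension of the unique maximal ideal of $A$ is a proper ideal of $B$; for a flat ring map this is exactly the criterion for faithful flatness, equivalently for the functor $- \otimes_A B$ to reflect zero objects, i.e. $N \otimes_A B = 0 \Rightarrow N = 0$ for every $A$-module $N$. This is the only place the local hypotheses are genuinely used.

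One implication is immediate and needs nothing beyond $\phi$ being a ring map: if $L$ is generated over $A$ by $x_1,\ldots,x_k$, then the elements $x_i \otimes 1$ generate $L \otimes_A B$ over $B$, so $L \otimes_A B$ is finitely generated.

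For the converse I would argue by descent. Assume $L \otimes_A B$ is finitely generated over $B$. First choose finitely many elements $x_1,\ldots,x_k \in L$ whose images $x_i \otimes 1$ generate $L \otimes_A B$ as a $B$-module; this is possible because any finite $B$-generating set of $L \otimes_A B$ is built from finitely many pure tensors and hence involves only finitely many elements of $L$. Let $N = \sum_{i=1}^k A x_i \subseteq L$, a finitely generated $A$-submodule, and consider the short exact sequence
\[
0 \rt N \rt L \rt L/N \rt 0.
\]
Since $B$ is flat over $A$, tensoring keeps this sequence exact, and the image of $N \otimes_A B$ inside $L \otimes_A B$ is precisely the $B$-submodule generated by the $x_i \otimes 1$, which by construction is all of $L \otimes_A B$. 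Hence $(L/N) \otimes_A B = 0$, and faithful flatness forces $L/N = 0$, so $L = N$ is finitely generated over $A$.

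The argument is essentially formal, following the standard descent pattern, and I do not expect a real obstacle once faithful flatness is in hand; the only step requiring any care is the promotion of flatness to faithful flatness, for which the locality of $A$, $B$ and $\phi$ is exactly what is needed.
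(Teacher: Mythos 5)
Your proof is correct, but it follows a different (equally standard) route from the paper's. You argue directly by faithfully flat descent: you first record that a flat local map is faithfully flat (via $\m_A B \subseteq \m_B \subsetneq B$), then lift a finite generating set of $L \otimes_A B$ to finitely many elements of $L$, form the finitely generated submodule $N$ they generate, observe that $(L/N) \otimes_A B = 0$ (right-exactness of the tensor product already suffices here), and conclude $L/N = 0$ by faithfulness. The paper instead proves the contrapositive: if $L$ is not finitely generated it admits a strictly ascending chain of submodules, faithful flatness preserves the strict inclusions after tensoring, and a strictly ascending infinite chain in $L \otimes_A B$ is incompatible with $L \otimes_A B$ being a finitely generated module over the Noetherian ring $B$. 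Your argument has two small advantages: it makes explicit why the map is faithfully flat (the paper invokes faithful flatness without comment), and it does not use the Noetherian hypothesis on $B$ at all, so it proves the statement for an arbitrary faithfully flat ring map. The paper's chain argument is shorter to write but genuinely needs $B$ Noetherian (to know that finitely generated $B$-modules satisfy the ascending chain condition). Either proof is perfectly adequate for the application in Lemma \ref{lemma2}.
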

\begin{proof}
If $L$ is finitely generated as a $A$-module then clearly $L\otimes_A B$ is finitely $B$-module.
Suppose now that $L$ is not a finitely generated $A$-module. Let
$$ L_1 \varsubsetneq L_2 \varsubsetneq \cdots \varsubsetneq L_n \varsubsetneq L_{n+1} \varsubsetneq \cdots $$
be a strictly ascending chain of submodules in $L$.
By faithful flatness we have that
$$ L_1\otimes B \varsubsetneq L_2\otimes B\varsubsetneq \cdots \varsubsetneq L_n\otimes B \varsubsetneq L_{n+1} \otimes B \varsubsetneq \cdots $$
is a strictly ascending chain of submodules of $L\otimes B$. It follows that $L\otimes B$ is not finitely generated.
\end{proof}

\section{Proof of the  Theorem \ref{main}}
In this section we prove our main result. We need the following easily proved fact.
\begin{proposition}\label{local}
Let $A$ be a Noetherian ring
 and let $T$ be an $A$-module. Let $f \in A$.
 Then the natural map 
 $$\eta \colon T \rt T_f  \ \text{ is injective if and only if } \  f \notin \bigcup_{P \in \Ass T}P. $$
\end{proposition}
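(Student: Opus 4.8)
The plan is to identify the kernel of $\eta$ explicitly and then recognize the condition for its vanishing in terms of zerodivisors. Recall that for the natural localization map $\eta\colon T \rt T_f$, an element $t$ maps to zero in $T_f$ precisely when $f^m t = 0$ for some $m \geq 1$; thus the kernel of $\eta$ is the $f$-power-torsion submodule $\{\, t \in T \mid f^m t = 0 \text{ for some } m \geq 1 \,\}$. So $\eta$ is injective if and only if this torsion submodule is zero, and the whole statement is really a statement about when $f$ acts torsion-freely on $T$.

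Next I would reduce the vanishing of $\ker\eta$ to a statement about $f$ alone: I claim $\ker\eta = 0$ if and only if $f$ is a nonzerodivisor on $T$. If $f$ is a nonzerodivisor then so is each power $f^m$ (peel off one factor at a time, so $f^m t = 0$ forces $t = 0$), whence $\ker\eta = 0$; conversely, if $f$ is a zerodivisor, say $ft = 0$ with $t \neq 0$, then already $t \in \ker\eta$. Hence injectivity of $\eta$ is equivalent to $f$ lying outside the set $Z(T)$ of zerodivisors on $T$.

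Finally I would invoke the standard description of the zerodivisor locus. Since $A$ is Noetherian, $Z(T) = \bigcup_{P \in \Ass T} P$ for every $A$-module $T$. The inclusion $\supseteq$ is immediate, as each $P \in \Ass T$ is the annihilator of a nonzero element and so consists of zerodivisors. For $\subseteq$, given a zerodivisor $a$ with $at = 0$, $t \neq 0$, I would choose among the annihilators $\ann(s)$ of the nonzero multiples $s = bt$ a maximal one $\ann(s_0)$, which exists because $A$ is Noetherian; a maximal annihilator of this kind is prime, hence lies in $\Ass T$, and the identity $a s_0 = b_0(at) = 0$ shows $a \in \ann(s_0)$. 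Combining this with the previous paragraph yields that $\eta$ is injective if and only if $f \notin \bigcup_{P \in \Ass T} P$, as required.

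I expect no serious obstacle here: the statement is elementary, which is why the author flags it merely as an easily proved fact. The only input beyond formal manipulation is the identification of $Z(T)$ with the union of the associated primes, and the one point worth stressing is that this holds for an \emph{arbitrary} (not necessarily finitely generated) module $T$ precisely because $A$ is Noetherian, which guarantees the maximal annihilator used above.
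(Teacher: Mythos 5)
Your proof is correct and complete. The paper itself offers no proof of this proposition (it is merely labelled an ``easily proved fact''), so there is nothing to compare against; your argument is the standard one: identify $\ker\eta$ as the $f$-power-torsion of $T$, observe that its vanishing is equivalent to $f$ being a nonzerodivisor on $T$, and invoke the identity $Z(T)=\bigcup_{P\in\Ass T}P$, which you correctly justify for an arbitrary (not necessarily finitely generated) module via a maximal annihilator among the nonzero multiples of a given element --- the one place where Noetherianness of $A$ is genuinely used, and exactly the point worth making explicit since $T=\FF(R)$ in the application is far from finitely generated.
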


We now give
\begin{proof}[Proof of Theorem \ref{main}]
Set $M = \FF(R)$. We prove that if $P$ is a prime ideal in $R$ and not maximal then $\mu_c(P, M) = 0$. Notice $\mu_c(P, M) = \mu_0(P, H^c_P(M))$, see \cite[1.4, 3.4(b)]{Lyu-1}. We consider two cases.

\textit{Case 1:} $\height P = n-1$.\\ Suppose if possible $\mu_0(P, H^c_P(M)) \neq 0$. Notice then $P$ is a minimal prime of $H^c_P(M)$. So if $\q \in \Ass_R H^c_P(M)$ and $\q \neq P$ then $\q$ is a maximal ideal of $R$.  In this case $\Gamma_\q(H^c_P(M)) = E(R/\q)^r$ for some $r > 0$. 
Since $\Ass H^c_P(M)$ is a finite set we can write $H^c_P(M) = L \oplus I$ \textit{as $R$-modules }where $\Ass_R L = \{ P \}$ and $I = E(R/\m_1)^{r_1}\oplus E(R/\m_2)^{r_2}\oplus \cdots \oplus E(R/\m_s)^{r_s}$ for some maximal ideals $\m_1,\ldots,\m_s$ and finite numbers $r_1,\ldots,r_s$. 
Thus $I$ is an injective $R$-module. Also note that both $L$ and $I$ are $P$-torsion. Further note that $I = \Gamma_{\m_1\m_2\cdots \m_s}(H^c_P(M))$ is a $A_n(K)$-submodule of $H^c_P(M)$ and so $L \cong H^c_P(M)/I$ is a holonomic $A_n(K)$-module.

Let $f \in R\setminus P$. Recall $\injdim M = c$. We have an exact sequence
\[
H^c_{(P,f)}(M) \rt H^c_P(M) \rt H^c_P(M)_f \rt H^{c+1}_{(P,f)}(M) = 0.
\]
Thus the natural map $\eta \colon L \rt L_f$ is surjective. As $f \notin P$ and $\Ass L = \{ P \}$ we get that $\eta $ is also injective. Thus $L = L_f$ for every $f \in R\setminus P$. It follows that $L = L_P$. Also note that $L_P = H^c_P(M)_P$. By \cite[3.4(b)]{Lyu-1}, $L_P = E(R/P)^l$ for some finite $l > 0$. Thus we have that $E(R/P)$ is a holonomic $A_n(K)$-module. By \ref{lemma1} this is a contradiction.

\textit{Case 2:} $\height P \leq n-2$. \\ Suppose if possible $\mu_0(P, H^c_P(M)) \neq 0$.  Let $\Ass H^c_P(M) = \{ P, Q_1,\ldots Q_c \}$ where $Q_i \neq P$. As $H^c_P(M)$ is $P$-torsion we have that $Q_i \supsetneq P$ for all $i$. Let $f_i \in Q_i \setminus P$. Put $f = f_1\cdots f_c$. By Theorem \ref{locus}(4) there exists a maximal ideal $\m$ of $R$ such that $f \notin \m$ and $(R/P)_\m $ is regular local. Localize at $\m$. Notice $\Ass_{R_\m} H^c_P(M)_\m = \{ PR_\m \}$.
Let $g \in R_\m \setminus PR_\m$. Notice $\injdim_{R_\m} M_\m  \leq c$. So we have an exact sequence
\[
H^c_{(PR_\m, g)}(M_\m) \rt H^c_{PR_\m}(M_\m) \rt H^c_{PR_\m}(M_\m)_g \rt H^{c+1}_{(PR_\m, g)}(M_\m) = 0. 
\]
Thus the natural map $\eta \colon H^c_{PR_\m}(M_\m) \rt H^c_{PR_\m}(M_\m)_g$ is surjective. By Lemma \ref{local} it is also injective as $\Ass H^c_{PR_\m}(M_\m) = \{ PR_\m \}$. It follows that $H^c_{PR_\m}(M_\m) = H^c_{PR_\m}(M_\m)_g$.
So $H^c_{PR_\m}(M_\m)= H^c_{PR_\m}(M_\m)_{PR_\m}$. By \cite[1.4, 3.4(b)]{Lyu-1}, we get that $H^c_{PR_\m}(M_\m)_P \cong E(R_\m/PR_\m)^s$ for some finite $s > 0$. 
By \ref{flat-L} there exist a Lyubeznik functor $\FF^\prime$ on $Mod(R_\m)$ with 
$\FF^\prime(R_\m) = \FF(R)\otimes R_\m = M_\m$.   Observe that 
$\GG = H^c_{PR_\m}\circ \FF^\prime$ is a Lyubeznik functor on $R_\m$. We have $\GG(R_\m) = E(R_\m/PR_\m)^s$. This contradicts Lemma \ref{lemma2}.
\end{proof}

\section{Proof of Proposition \ref{main-2}}
In this section we prove Proposition \ref{main-2}. Throughout $K$ is an algebraically  closed field of characteristic zero. Let $R = K[X_1,\ldots, X_n]$ and let  $A_n(K)$ be the $n^{th}$-Weyl algebra over $K$. We use notions developed in \cite[Chapter 1]{B}, in particular we use the notion of Bernstein filtration of $A_n(K)$, good filtration, multiplicity and dimension of a finitely generated $A_n(K)$-module. 
We will use the fact that for any holonmic module $M$ we have $\ell(M) \leq e(M)$; here $\ell(M)$ denotes the length of $M$ as an $A_n(K)$-module and $e(M)$ denotes its multiplicity.

The following result is well-known. So we just sketch an argument.
\begin{proposition}\label{mult-inj}
Let $\m$ be a maximal ideal of $R$. Then $e(E(R/\m)) = 1$. In particular $E(R/\m)$ is a simple $A_n(K)$-module.
\end{proposition}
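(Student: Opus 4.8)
The plan is to reduce to the coordinate maximal ideal $\m = (X_1,\ldots,X_n)$ and then read off the multiplicity from an explicit good filtration. Since $K$ is algebraically closed, the Nullstellensatz gives $\m = (X_1-a_1,\ldots,X_n-a_n)$ for some $a_i \in K$. The translation $\sigma\colon X_i \mapsto X_i + a_i,\ \partial_i \mapsto \partial_i$ is an automorphism of $A_n(K)$ carrying $(X_1,\ldots,X_n)$ to $\m$, and it preserves the Bernstein filtration $F_\bullet$: indeed $\sigma(F_1) = F_1$ because $a_i$ is a scalar and $F_\bullet$ is the multiplicative filtration generated in degree one, whence $\sigma(F_k) = F_k$ for all $k$. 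Consequently $\sigma$ preserves dimension and multiplicity, so I may assume $\m = (X_1,\ldots,X_n)$ from now on.

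Next I would identify the module. As $R_\m$ is regular local of dimension $n$, hence Gorenstein, one has $E(R/\m) = H^n_\m(R)$ (consistent with the formula $E(R/P) = H^g_P(R)_P$ recalled earlier, with $g = n$). I would then describe $H^n_\m(R)$ by its standard inverse-monomial $K$-basis $\{X_1^{-b_1}\cdots X_n^{-b_n} : b_i \geq 1\}$, on which $X_i$ acts by multiplication (killing a monomial once its $i$-th exponent reaches $0$) and $\partial_i$ acts by differentiation. The socle generator $\xi = 1/(X_1\cdots X_n)$ is annihilated by every $X_i$, and repeated application of the $\partial_i$ produces all basis elements, so $M := E(R/\m) = A_n(K)\xi$ is cyclic and the surjection $A_n(K) \to M,\ 1 \mapsto \xi$, has kernel exactly the left ideal $A_n(K)(X_1,\ldots,X_n)$. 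Moving all $\partial$'s to the left then identifies $M \cong A_n(K)/A_n(K)(X_1,\ldots,X_n) \cong K[\partial_1,\ldots,\partial_n]$ as a $K$-vector space.

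Finally I would compute the multiplicity. The Bernstein filtration induces a good filtration $\Gamma_\bullet$ on $M$ whose $k$-th stage is spanned by the images of the $\partial^\beta$ with $|\beta| \leq k$, so that
\[
\dim_K \Gamma_k = \binom{n+k}{n} = \frac{1}{n!}k^n + (\text{lower order terms}).
\]
Hence $\dim M = n$, the module $M$ is holonomic, and its multiplicity is $e(M) = n!\cdot \tfrac{1}{n!} = 1$, giving $e(E(R/\m)) = 1$. Since for any holonomic module $\ell(M) \leq e(M)$ and $M \neq 0$, we get $\ell(M) = 1$, so $E(R/\m)$ is a simple $A_n(K)$-module.

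The computation is entirely routine once the inverse-monomial description is in hand; the only point requiring a little care is the first step, namely checking that the translation automorphism $\sigma$ fixes the Bernstein filtration, so that the multiplicity is genuinely independent of the chosen point $a$ and the reduction to $\m = (X_1,\ldots,X_n)$ is legitimate.
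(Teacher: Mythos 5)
Your proposal is correct and follows essentially the same route as the paper's (sketched) proof: translate to the origin, identify $E(R/\m)$ with $K[\partial_1,\ldots,\partial_n]$, observe that the obvious filtration is good and compatible with the Bernstein filtration so that $e(E(R/\m))=1$, and deduce simplicity from $\ell(M)\le e(M)$. You have merely filled in the details the paper leaves implicit, such as the verification that the translation automorphism preserves the Bernstein filtration and the explicit identification $E(R/\m)\cong A_n(K)/A_n(K)(X_1,\ldots,X_n)$.
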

\begin{proof}(Sketch)
As $K$ is algebraically closed $\m = (X_1-a_1,\ldots,X_n-a_n)$ for some $a_1,\ldots,a_n \in K$. After a change of variables we may assume $a_1= \cdots = a_n = 0$.
Note $E(R/\m) = K[\partial_1,\ldots,\partial_n]$. The obvious filtration on $E(R/\m)$ is compatible with the Bernstein filtration and is good. So $e(E(R/\m)) = 1$.
\end{proof}

\s\label{bound} Let $M$ be a holonomic $A_n(K)$-module. Let $f \in R$ be a polynomial of degree $d$. Then by proof of Theorem 5.19 in Chapter 1 of \cite{B} we have 
$$e(M_f) \leq e(M)(1+ \deg f )^n.$$

We now give
\begin{proof}[Proof of Proposition \ref{main-2}]
Set $M = \FF(R)$.  Let $\m = (X_1-a_1,\ldots,X_n-a_n)$ be a maximal ideal of $R$. Fix $i$ with $0 \leq i \leq c$.  Notice $\mu_i(\m, M) = \mu_0(\m, H^i_\m(M))$, see \cite[1.4, 3.4(b)]{Lyu-1}.  If $H^i_\m(M) = E(R/\m)^{r_i}$ then $\mu_i(\m, M) = r_i =  \ell(H^i_\m(M))$.
To compute $H^i_\m(M)$ we use the \v{C}ech-complex:
\[
\C \colon 0 \rt M \rt \bigoplus_{j = 1}^{n}M_{(X_j-a_j)} \rt \cdots \rt M_{(X_1-a_1)\cdots(X_n-a_n)} \rt 0.
\]
In particular we have that $\ell(H^i_\m(M)) \leq \ell(\C^i)$. Notice $C^i$ has $\binom{n}{i}$ copies of modules of the form $M_f$ were $f$ is a product of $i$ distinct polynomials among $X_1-a_1,\cdots,X_n-a_n$. In particular $\deg f = i$. So by 
\ref{bound} we have $e(M_f) \leq e(M)(1+ i)^n$. Thus
$$r_i \leq e(\C^i) \leq \binom{n}{i}e(M)(1+ i)^n.$$
\end{proof}

\begin{remark}\label{char-p}
If $K_p$ is an algebraically closed field of characteristic $p$ and $S = K_p[X_1,\ldots,X_n]$ then Proposition \ref{main-2} holds for functors of the form 
$$\GG(-) = H^{i_1}_{I_1}(H^{i_2}_{I_2}(\cdots (H^{i_r}_{I_r}(-))\cdots ).$$
 The point is that $\GG(R)$ is holonomic $D$-module where $D$ is the ring of 
 $K_p$-linear differential operators over $S$. Here we use the notion of holonomicity by V. Bavula \cite{VB}. In this case
the bound $\ell(M_f) \leq n!\ell(M)(1+\deg f)^n$ holds, see \cite[Proof of 3.6]{Lyu-3}. The proof then follows by the same argument as before.

\end{remark}


\begin{thebibliography}{10}
\bibitem{VB}
V.~Bavula, 
\emph{Dimension, multiplicity, holonomic modules, and an analogue of the inequality of Bernstein for rings of differential operators in prime characteristic}, Representation Theory, Electron. J. AMS 13 (2009) 182–-227.

\bibitem{B}
J.~-E.~Bj\"{o}rk, 
\emph{Rings of Differential Operators},
 Amsterdam, North-Holland, 1979. 
 

\bibitem{BH} W.~Bruns and J.~Herzog, 
 \emph{Cohen-{M}acaulay Rings, revised edition},
  Cambridge Stud. Adv. Math., vol. 39, Cambridge University Press, Cambridge, (1998). 

\bibitem{HuSh}
C.~Huneke and R.~Sharp,
\emph{ Bass Numbers of Local Cohomology Modules}, 
AMS Transactions 339 (1993), 765–-779. 


\bibitem{Lyu-1}
G.~Lyubeznik, 
\emph{Finiteness Properties of Local Cohomology Modules (an Application of D-modules to Commutative Algebra)},
 Inv. Math. 113 (1993), 41–-55.
 
\bibitem{Lyu-2}
\bysame,
\emph{F-modules: applications to local cohomology and D-modules in characteristic p>0}, 
J. Reine Angew. Math. 491 (1997), 65–-130.

\bibitem{Lyu-3}
\bysame,
\emph{A characteristic-free proof of a basic result on D-modules}, 
J. Pure Appl. Algebra 215 (2011), no. 8, 2019–-2023. 

\bibitem{Mat}
H.~Matsumura, \emph{Commutative ring theory}, second ed., Cambridge
  Studies in Advanced Mathematics, vol.~8, Cambridge University Press,
  Cambridge, 1989. 





\end{thebibliography}
\end{document}